\documentclass[review]{elsarticle}

\usepackage{lineno,hyperref}
\modulolinenumbers[5]

\journal{Journal of \LaTeX\ Templates}

\usepackage{amsthm}
\usepackage[top=2cm, bottom=2cm, left=2cm, right=2cm]{geometry}

\usepackage{xcolor} 

\def\Xint#1{\mathchoice
{\XXint\displaystyle\textstyle{#1}}%
{\XXint\textstyle\scriptstyle{#1}}%
{\XXint\scriptstyle\scriptscriptstyle{#1}}%
{\XXint\scriptscriptstyle\scriptscriptstyle{#1}}%
\!\int}
\def\XXint#1#2#3{{\setbox0=\hbox{$#1{#2#3}{\int}$ }
\vcenter{\hbox{$#2#3$ }}\kern-.6\wd0}}

\def\dashint{\Xint-}

\usepackage{amsfonts}
\usepackage{amsthm}
\usepackage{amssymb}
\usepackage{amsmath}
\usepackage{mhchem}
\usepackage{mathtools}

\newtheorem*{thm*}{Theorem}
\newtheorem{thm}{Theorem}
\newtheorem{lemma}[thm]{Lemma}
\newtheorem{proposition}[thm]{Proposition}
\newtheorem{remark}[thm]{Remark}

\usepackage{amsthm}
\theoremstyle{plain}

\begin{document}

\begin{frontmatter}

\title{Nonlocal reaction preventing blow-up in the supercritical case of chemotaxis}
\author{Evangelos A. Latos\footnote{Institut f\"ur Mathematik und
    Wissenschaftliches Rechnen, Heinrichstra{\ss}e 36, 8010
    Graz, Austria. Email: \texttt{evangelos.latos@uni-graz.at}.}
        }

\begin{abstract}
This paper studies the non-negative solutions of the Keller-Segel model with a nonlocal nonlinear source in a bounded domain. The competition between the aggregation and the nonlocal reaction term is highlighted: when the growth factor is stronger than the dampening effect, with the help of the nonlocal term, the model admits a classical solution which is uniformly bounded. Moreover, when the growth factor is of the same order compared to the dampening effect, the nonlocal nonlinear exponents can prevent the chemotactic collapse. Global existence of classical solutions is shown for an appropriate range of the exponents as well as convergence to the constant  equilibrium state. 
\end{abstract}

\begin{keyword}
{Global dynamics \sep Classical solutions \sep  Convergence to the equilibrium\sep Nonlocal reaction terms.}
\end{keyword}

\end{frontmatter}

\section{Introduction}
Chemotaxis was originally described by the Keller-Segel model which was  introduced in \cite{KS1,KS2}. The model describes the characteristic movement of cells where there are two options: the cells can be attracted (move toward) by the increasing signal concentration or they can be repulsive by this signal. Afterwards, many mathematical models have been proposed to describe chemotaxis in the last few years. The simplest version, see for example \cite{bp07}, describes the competition between the diffusion, the reproduction and the nonlocal aggregation satisfying 
\begin{align} \label{nkpp00}
\left\{
      \begin{array}{ll}
      \ u_t=\Delta u-\chi\nabla \cdot (u^m \nabla c)+\lambda f(u), &  x \in \Omega, t>0, \\
      -\Delta c+c=u^\gamma, & x \in \Omega, t>0, \\
    \nabla u \cdot \nu=\nabla c \cdot \nu=0,  & x \in \partial \Omega, t>0, \\
     u(x,0)=u_0(x)\geq 0, &  x \in \Omega,
      \end{array}\right.
\end{align}
where $\Omega$ is a smooth bounded domain in $\mathbb{R}^n\ (n \ge 3)$, $m,\lambda,\gamma,\chi>0$ and $\nu$ is the outer unit normal vector on $\partial \Omega$. The initial data are assumed to be $u_0 \ge 0$, $u_0 \in C^{\theta}(\overline{\Omega})$ for some $\theta \in (0,1)$. The reaction term is taken to be
$$
f(u)=u^\alpha \left(1-\sigma\dashint_{\Omega} u^\beta dx\right)
$$
with $\alpha \ge 1, \beta>1,\sigma>0$. 

In the context of biological aggregation, $u(x,t)$ represents the bacteria density while $c(x,t)$ is the concentration of the chemical substance. The reaction term describes the reproduction rate of the bacteria where the resources of the environment can be consumed either locally or non-locally. The Keller-Segel system we consider describes the situation where chemicals diffuse much faster than cells \cite{JL92}, thus the original parabolic-parabolic problem can be reduced into \eqref{nkpp00}.

\subsection*{State of the art}

Since the literature for Keller-Segel models with a reaction term is vast, we will present some of the most important results. System \eqref{nkpp00} with $f(u) \equiv 0,\ m=\gamma=1$ is the classical parabolic-elliptic Keller-Segel model which expresses the random movement(brownian motion) of the cells with a bias directed by the chemoattractant concentration \cite{bp07} see also \cite{HV96,JL92,Na95} and the references therein. Moreover, for 

\begin{align*}
& u_t=\Delta u-\chi \nabla \cdot (u \nabla c),  \\
& -\Delta c=u-1,
\end{align*}

and $N=1$ blow-up never occurs \cite{Na95}, while for $N=2$ there exists a threshold depending on the initial data that separates global existence and finite time blow-up \cite{JL92}.

For $m=\gamma=1$ in \eqref{nkpp00},  the authors in \cite{TW07}  considered the reaction term  $
f(u) \le a-b u^2,\ u \ge 0,
$
which takes into account the consumption of resources around the environment. Moreover, they proved the existence of a global-in-time classical solution for either $n \le 2$ or $n \ge 3$ and $b>\frac{n-2}{2}\chi$. In addition, for all $n \ge 1, b>0$
and arbitrary initial data there exists at least one global weak solution given by $f(u) \ge -c_0(u^2+1),~\forall u>0$ with some $c_0>0.$

In \cite{Galakhov:2016bka} the authors considered:
\begin{align} \label{Gal}
\left\{
      \begin{array}{ll}
      \ u_t=\Delta u-\chi\nabla \cdot (u^m \nabla w)+\mu u\left(1- u^\alpha \right), &  x \in \Omega, t>0, \\
      -\Delta w+w=u^\gamma, & x \in \Omega, t>0, \\
    \nabla u \cdot \nu=\nabla c \cdot \nu=0,  & x \in \partial \Omega, t>0, 
      \end{array}\right.
\end{align}
and proved the existence of a unique global in time solution when
\begin{align}\label{GalCond}
\alpha &>m+\gamma-1,\quad \text{or}\\
\alpha &=m+\gamma-1,\quad \text{and}\ \mu>\frac{n\alpha-2}{2(m-1)+n\alpha}\chi.
\end{align}

Since there is a fertile area for this research, it's difficult to cover all the important results, we refer the interested readers to \cite{He:2016fz,Issa:2016uq,LS16,NakOsa11,NegTel13,SzyRoLaCha09,WL17,ZhLi15,Zh15}.

Logistic growth described by nonlocal terms has been investigated in the recent years. In \cite{NegTel13} the authors focused on the parabolic-elliptic system with a linear competitive effect:
\begin{align}\label{NT13}
\left\{
      \begin{array}{ll}
       u_t=\Delta u-\chi \nabla \cdot (u \nabla c)+u\left( a_0-a_1 u-\frac{a_2}{|\Omega|} \int_{\Omega} u dx \right), \\
     -\Delta c+ c=u+g
      \end{array}\right.
\end{align}
where $a_0,a_1>0,\chi>0,a_2 \in \mathbb{R}$ and $g$ is a uniformly bounded function. Here, as the population grows, the effect of the local term $a_1 u$ overcomes the effect of the nonlocal term $\int_{\Omega} u dx$ and therefore, the reaction term $f(u)$ behaves like $u(a_0-a_1 u)$. By using comparison methods based on upper and lower solutions, the authors proved that when $a_1>2 \chi+|a_2|$, then $\|u-\frac{a_0}{a_1+a_2}\|_{L^\infty(\Omega)} \to 0$ as time goes to infinity.

Recently, in \cite{BCL18ws} on $\mathbb{R}^n$ and in \cite{BCL18} on a bounded domain the authors studied the following chemotaxis system with a nonlocal reaction:
\begin{align} \label{nkpp00aaa}
\left\{
      \begin{array}{ll}
      \ u_t=\Delta u-\nabla \cdot (u \nabla c)+u^\alpha \left(1-\int_{\Omega} u^\beta dx\right)), &  x \in \Omega, t>0, \\
      -\Delta c+c=u, & x \in \Omega, t>0, \\
    \nabla u \cdot \nu=\nabla c \cdot \nu=0,  & x \in \partial \Omega, t>0, \\
     u(x,0)=u_0(x)\geq 0, &  x \in \Omega.
      \end{array}\right.
\end{align}
They showed that the nonlocal term $\int_{\Omega} u^\beta dx$ can actually prevent the chemotactic collapse. The result for the bounded domain case is the following:
\begin{thm*}[\cite{BCL18}]
Let $n \ge 3, \alpha \ge 1, \beta>1$ and $u_0 \in C^{\theta}(\overline{\Omega})$ for some $\theta \in (0,1)$.
If
$$
2 \le \alpha <1+2 \beta/n
$$
or
$$
\alpha<2 \mbox{~~~and~~~} \frac{n+2}{n} \left(2-\alpha\right) < 1+2\beta/n-\alpha,
$$
then the problem \eqref{nkpp00aaa} admits a unique global classical solution which is uniformly bounded. Besides, the following estimate for any $t>0$ holds true,
\begin{align}
\|u(\cdot, t)\|_{L^\infty(\Omega)} \le C(\|u_0\|_{L^1(\Omega)},\|u_0\|_{L^\infty(\Omega)}).
\end{align}
\end{thm*}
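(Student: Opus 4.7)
I would begin by invoking standard parabolic fixed-point theory to obtain a unique local classical solution $u\in C^0(\overline{\Omega}\times[0,T_{\max}))\cap C^{2,1}(\overline{\Omega}\times(0,T_{\max}))$ on a maximal time interval, together with the usual extensibility criterion that boundedness of $\|u(\cdot,t)\|_{L^q(\Omega)}$ for some $q>n/2$ on $[0,T_{\max})$ forces $T_{\max}=\infty$ and uniform boundedness in $L^\infty$. The whole proof then reduces to producing such a uniform $L^q$ bound.

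The central identity is obtained by testing the first equation against $u^{p-1}$ for $p>1$. Using integration by parts, the elliptic equation $-\Delta c+c=u$ to rewrite the chemotactic cross term as $\tfrac{p-1}{p}\int u^p(u-c)$, and the maximum-principle bound $c\ge 0$, one arrives at the differential inequality
\begin{equation*}
\frac{1}{p}\frac{d}{dt}\int_\Omega u^p + \frac{4(p-1)}{p^2}\int_\Omega|\nabla u^{p/2}|^2 \le \frac{p-1}{p}\int_\Omega u^{p+1} + \int_\Omega u^{p+\alpha-1} - \int_\Omega u^\beta\int_\Omega u^{p+\alpha-1}.
\end{equation*}
The task is then to absorb the destabilizing aggregation term $\int u^{p+1}$ and the bare reaction $\int u^{p+\alpha-1}$ using a combination of the gradient dissipation and the nonlocal damping $\int u^\beta\int u^{p+\alpha-1}$.

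The core step is a two-stage interpolation/Young argument. A Gagliardo--Nirenberg inequality applied to $u^{p/2}$ bounds $\int u^{p+1}$ by a product of $\|\nabla u^{p/2}\|_{L^2}^{\,2\vartheta}$ and $\|u\|_{L^\beta}^{\,\kappa}$ for an admissible interpolation exponent $\vartheta<1$, and a first Young inequality splits this against the dissipation, leaving a residual factor $C\bigl(\int u^\beta\bigr)^{\kappa'}$. A second Young inequality then pairs the residual with the product $\int u^\beta\int u^{p+\alpha-1}$ from the nonlocal damping. Admissibility of both Young splittings for some $p>n/2$ is precisely what produces the two alternative hypotheses in the theorem: when $\alpha\ge 2$ the local reaction $u^{p+\alpha-1}$ already dominates $u^{p+1}$ up to a bounded term, and the condition collapses to $\alpha<1+2\beta/n$, i.e.\ $\beta>n(\alpha-1)/2$; when $\alpha<2$ the local reaction is too weak and the sharper compatibility $\tfrac{n+2}{n}(2-\alpha)<1+2\beta/n-\alpha$, equivalently $\alpha+\beta>n/2+2$, is what closes the scheme.

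Once $\sup_{0\le t<T_{\max}}\|u(\cdot,t)\|_{L^p(\Omega)}<\infty$ is secured for some $p>n/2$, elliptic regularity applied to the second equation yields $\|\nabla c(\cdot,t)\|_{L^\infty}\le C$, and a standard Moser-type iteration (or a bootstrap via $L^p$--$L^q$ estimates for the Neumann heat semigroup applied to the Duhamel representation of $u$) upgrades $L^p$ control to $L^\infty$, with the stated quantitative dependence on $\|u_0\|_{L^1}$ and $\|u_0\|_{L^\infty}$ recoverable by tracking constants. Uniqueness follows from a Gronwall estimate on the $L^2$-norm of the difference of two solutions combined with the elliptic estimate for $c_1-c_2$. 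The hard part is unambiguously the exponent bookkeeping in the third paragraph: one needs a single $p>n/2$ that simultaneously respects $\vartheta<1$ in the Gagliardo--Nirenberg interpolation and the double Young exponent balance, and the stated hypotheses on $\alpha,\beta,n$ are sharp precisely as the existence conditions for such a $p$.
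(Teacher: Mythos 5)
Your proposal follows essentially the same route as the paper's argument (the quoted theorem is the $m=\gamma=1$ case of Theorem~\ref{thm1}, proved in Section~\ref{sec2} via Propositions~\ref{pro0} and~\ref{pro1}): local existence plus an extensibility criterion, testing with $u^{p-1}$ and using the elliptic equation to turn the cross term into $c\,u^{p+1}$-type contributions, a case split at $\alpha=2$, a Gagliardo--Nirenberg interpolation combined with a double Young splitting against the dissipation and the nonlocal damping $\int u^\beta\int u^{p+\alpha-1}$ (yielding exactly the condition $\alpha+\beta>\tfrac{n+4}{2}$ in the subcritical case), and a Moser-type iteration to reach $L^\infty$. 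The exponent bookkeeping you sketch matches the paper's choice of intermediate exponent $k'=\tfrac{k+\alpha-1+\beta}{2}$ and the constraint \eqref{17}, so no substantive difference to report.
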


At this point we highlight the benefits of considering a nonlocal Fischer-KPP (Kolmogorov-Petrovsky-Piskunov) reaction term instead of a logistic one. The classical chemotaxis system (parabolic-elliptic) is actually a nonlocal problem therefore, in order to control the behavior of its solution it only seems natural to consider a nonlocal reaction term. As it was stated in \cite{NegTel13} that ``it seems to conjecture that the dampening effect of the nonlocal terms might lead to an even more effective homogenization". Moreover, logistic growth described by nonlocal terms has already been used in competitive systems modelling cancer cells behavior which considers the influence of the surrounding area of a cell \cite{NegTel13,SzyRoLaCha09} and it can also describe Darwinian evolution of a structured population \cite{KPP} or nuclear reaction processes \cite{HY95,WW96}. Therefore the effect of the nonlocal term on the diffusion-aggregation-reaction equation is quite attractive.

Additionally, the analytical results on chemotaxis with logistic sources mostly concentrate on local reaction terms ($a u-b u^\alpha$) where $a,b>0$:
\begin{align}\label{presum}
u_t=\Delta u-\chi \nabla u^{m} \cdot \nabla c-\chi u^m c+ \chi u^{m+\gamma}+a u-b u^\alpha.
\end{align}
Model \eqref{presum} possesses a global classical solution under the consideration that either the dampening effect is stronger than the growth factor \cite{Galakhov:2016bka,WangMuChZ14}, i.e. $\alpha>m+\gamma$, or that the dampening effect is of the same order compared to the growth factor \cite{NegTel13,TW07}, i.e. $\alpha=m+\gamma$, combining some constraints on the coefficients $b$ and $\chi$. To the best of our knowledge, when the growth factor is stronger than the dampening effect, whether the model admits a global solution is still open. Moreover, most of the known results focus on reaction terms of the type $g(u)-h(u)$ where $g(u)\sim u^a$ and $h(u)\sim u^b$ with $b>a$. The essential difference between previous problems and the present one is that the reaction term now is of the form $u^\alpha -u^\alpha \int_{\Omega} u^\beta dx$. Actually, model \eqref{nkpp00} can be rewritten as:
\begin{align*}
u_t=\Delta u-\chi\nabla u^m \cdot \nabla c+\chi u^{m+\gamma} - \chi cu^{m}+\lambda u^\alpha-\lambda  u^\alpha \int_{\Omega} u^\beta dx.
\end{align*}
Here, If the growth factor $u^{m+\gamma}$ is stronger than the dampening effect $u^\alpha$ then, the nonlocal term $\int_{\Omega} u^\beta dx$ can actually help preventing the chemotactic collapse.

\subsection*{Main results}

Our work focuses on the study of the global-in-time existence of classical solutions and their asymptotic behaviour. We prove the following results:
\begin{itemize}
\item The existence of classical solutions to \eqref{nkpp00} under appropriate conditions on the parameters. 
\item The convergence to the constant equilibrium state.
\end{itemize}

The first result on the global existence extends the result found in \cite{BCL18}. The extra nonlinearities $u^m,\ u^\gamma$ that now appear make the procedure a bit more technical, but most importantly they point out the role of these two terms on the balance between global existence and blow-up. The second one, which deals with the convergence to the constant equilibrium state, is a new result and it is the main result of this work. The  difficulty of this lies mostly on controlling the nonlinear nonlocal reaction terms (the choice of the solution pair which was used in \cite{NegTel13} does not apply here because of the nonlinearities that appear in our problem) which is handled by choosing the appropriate pair of upper-lower solutions and then using the "rectangle method" to get the convergence.

More precisely, our first result reads as follows:

\begin{thm}[Global existence]\label{thm1}
Let $n \ge 3,\ \alpha,\gamma,m \ge 1,\ \beta>1$ satisfy
\begin{align}\label{conditions}
\gamma+m \le \alpha <1+2 \beta/n
\quad\text{or}\quad
\frac{n+4}{2}-\beta<\alpha<\gamma+m,
\end{align}
then, problem \eqref{nkpp00} admits a unique global classical solution which is uniformly bounded. Moreover, the following estimate holds, then for any $t>0$,
\begin{align}
\|u(\cdot, t)\|_{L^\infty(\Omega)} \le C(\|u_0\|_{L^1(\Omega)},\|u_0\|_{L^\infty(\Omega)}).
\end{align}

\end{thm}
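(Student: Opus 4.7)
The plan is to reduce global existence to a uniform a priori $L^\infty$ bound (via the standard blow-up alternative from local well-posedness) and then to establish such a bound by a chain of $L^p$ estimates that exploit the nonlocal dissipation. Local existence of a unique classical solution on a maximal interval $[0,T_{\max})$ with the criterion $T_{\max}<\infty\Rightarrow \|u(\cdot,t)\|_{L^\infty}\to\infty$ is standard for this quasi-linear parabolic-elliptic system (Amann-type theory, or fixed point coupled with elliptic regularity for $c$). A preliminary $L^1$ estimate is obtained by integrating the first equation: using Jensen's inequality $\dashint_{\Omega}u^\beta\,dx\ge (\dashint_{\Omega}u\,dx)^\beta$ and $\alpha\ge 1$, one sees that either $\frac{d}{dt}\int u\,dx\le 0$ or $\|u(\cdot,t)\|_{L^1}$ is a priori bounded by a constant depending on $\sigma,|\Omega|,\beta$.

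The core step is the $L^p$ testing. Multiplying the $u$-equation by $u^{p-1}$, integrating by parts on the chemotactic flux, and using $-\Delta c+c=u^\gamma$ to eliminate $\Delta c$, while dropping the non-positive contribution $-\chi\int c\,u^{p+m-1}$, gives
\begin{equation*}
\frac{1}{p}\frac{d}{dt}\|u\|_{L^p}^p+\frac{4(p-1)}{p^2}\|\nabla u^{p/2}\|_{L^2}^2 \le \frac{\chi(p-1)}{p+m-1}\int_{\Omega}\!u^{p+m+\gamma-1}\,dx + \lambda\!\int_{\Omega}\!u^{p+\alpha-1}\,dx - \lambda\sigma\Bigl(\dashint_{\Omega}\!u^\beta\,dx\Bigr)\!\int_{\Omega}\!u^{p+\alpha-1}\,dx.
\end{equation*}
The objective is to absorb the two positive terms using the dissipative gradient contribution and the nonlocal damping.

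In the regime $\gamma+m\le\alpha$, Young's inequality reduces $u^{p+m+\gamma-1}$ to a fraction of $u^{p+\alpha-1}$ plus a lower order term. I would then split according to the size of $\dashint u^\beta$: when it exceeds a fixed threshold, the damping alone beats the growth; when it is small, the $L^\beta$-norm is a priori controlled and Gagliardo-Nirenberg interpolation anchored on $\|u\|_{L^\beta}$ absorbs $\int u^{p+\alpha-1}$ into the gradient term plus $\|u\|_{L^p}^p$, which is exactly where the subcritical assumption $\alpha<1+2\beta/n$ enters. In the regime $\alpha<\gamma+m$, Young no longer reaches the chemotactic power; instead one factors $u^{p+m+\gamma-1}=u^{p+\alpha-1}\cdot u^{m+\gamma-\alpha}$, uses Hölder to pair the second factor with the nonlocal damping, and applies Gagliardo-Nirenberg to the first factor against the $L^\beta$-norm. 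A direct computation of the resulting GN exponents shows that the critical power remains strictly below $2$ (hence absorbable into the gradient term) precisely when $\alpha+\beta>(n+4)/2$, which is the second hypothesis. In both cases one arrives at a differential inequality of the form $\frac{d}{dt}\|u\|_{L^p}^p\le K(\|u\|_{L^p}^p+1)$, yielding uniform $L^p$ bounds for every finite $p$.

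The last step is a Moser-type bootstrap to $L^\infty$: once $\|u\|_{L^p}$ is under control for some $p$ large enough (e.g.\ $p>\tfrac{n}{2}\max(m,\gamma)$), elliptic regularity for $c$ gives $\nabla c\in L^q$ with large $q$, and the Neumann heat semigroup representation combined with its $L^p$-$L^q$ smoothing estimates upgrades the bound to $L^\infty$, contradicting the blow-up criterion and delivering the stated constant depending only on $\|u_0\|_{L^1}$ and $\|u_0\|_{L^\infty}$. I expect the main obstacle to be the second regime $\alpha<\gamma+m$: because the chemotactic order strictly exceeds the reaction order, the nonlocal dissipation cannot be harvested via a direct Young inequality and must be coupled to Gagliardo-Nirenberg through a two-step Hölder split, and the delicate exponent bookkeeping is what produces the sharp threshold $\alpha+\beta=(n+4)/2$ appearing in the hypothesis.
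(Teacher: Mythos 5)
Your strategy coincides with the paper's: local existence plus the blow-up alternative, $L^p$ testing with $u^{p-1}$ and elimination of $\Delta c$ through the elliptic equation $-\Delta c+c=u^\gamma$, Young's inequality in the regime $\gamma+m\le\alpha<1+2\beta/n$ followed by the standard dichotomy on the size of $\dashint_\Omega u^\beta\,dx$, and, in the regime $\alpha<\gamma+m$, a Gagliardo--Nirenberg/interpolation coupling of the supercritical term $\int_\Omega u^{p+m+\gamma-1}\,dx$ to the nonlocal product $\int_\Omega u^{p+\alpha-1}\,dx\int_\Omega u^\beta\,dx$, with the exponent bookkeeping producing exactly the threshold $\alpha+\beta>\frac{n+4}{2}$. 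The paper executes that coupling in the opposite order (first Gagliardo--Nirenberg on $u^{k/2}$ with an auxiliary exponent $k'$, then interpolation of $\|u\|_{L^{k'}}$ between $L^{k+\alpha-1}$ and $L^\beta$ with the choice $k'=\frac{k+\alpha-1+\beta}{2}$), but this is a reshuffling of your H\"older-then-GN split, not a different mechanism.

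One concrete gap: a differential inequality of the form $\frac{d}{dt}\|u\|_{L^p}^p\le K(\|u\|_{L^p}^p+1)$ only yields $\|u(\cdot,t)\|_{L^p}^p\le(\|u_0\|_{L^p}^p+1)e^{Kt}$, i.e.\ bounds on finite time intervals, whereas the theorem asserts a bound uniform in $t$, depending only on $\|u_0\|_{L^1}$ and $\|u_0\|_{L^\infty}$, and this uniformity is also what makes the final iteration deliver a $t$-independent $L^\infty$ constant. What is needed is the dissipative form $\frac{d}{dt}\|u\|_{L^p}^p\le-\varepsilon\|u\|_{L^p}^p+C$: the leftover $\|u\|_{L^p}^p$ on the right must itself be absorbed into the gradient term via Gagliardo--Nirenberg anchored on the a priori $L^1$ (or $L^\beta$) control, which is precisely how the paper closes the estimate before invoking the Moser-type step. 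You have all the ingredients for this correction; it is the stated form of the final differential inequality, not the method, that falls short of the claimed uniform bound.
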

\begin{remark}
Our results are indeed different from the case $f(u)\equiv 0$, we have proven that the solution is uniformly bounded in $L^k(\Omega)$ for $1 \le k \le \infty$ without any restriction on the initial data. While in chemotaxis systems with $f(u) \equiv 0,$ the solution will exist globally only if one imposes a smallness condition on the initial data. 
\end{remark}

\begin{remark}
It is well known, for example in \cite{JL92}, that the solution of chemotaxis system without reaction blows up in finite time for large initial data. Comparing this result with Theorem 1 shows that an appropriate nonlocal nonlinear dampening effect could give a global in time solution without any restriction on the initial data. The condition of Theorem 1 implies $\beta>\frac{n}{2}(\gamma+m-1)$ can prevent chemotactic collapse. However, whether there exists a blow-up solution to our model under the assumption that $\beta\leq\frac{n}{2}(\gamma+m-1)$ for higher dimension is still open.
\end{remark}


Now we turn to the asymptotic behavior of the solution. We consider the spatial homogeneous version of our system (the elliptic-elliptic system) for example the integral in this case translates into: $
\sigma\dashint_\Omega \overline u^\beta\;dx=\sigma\overline u^\beta.
$

 Also, we denote
 \begin{equation}\label{equi}
 (u,c)=(\xi,\xi),\quad\text{with}\quad\xi=\sigma^{-\frac1\beta},
 \end{equation}
the constant solution to the corresponding to \eqref{nkpp00} elliptic-elliptic system.

We will use a rectangle method, see for exmple \cite{FT02}. We start by constructing a sub- and super-solution that are homogeneous in space. Therefore, we consider the following ODE system:
\begin{align}\label{odesyst}
\overline u_t
&=\chi \overline u^m(\overline u^\gamma-\underline u^\gamma)
+\lambda\sigma \overline u^\alpha (\xi^\beta-\overline u^\beta),
\nonumber\\
\underline u_t&=\chi \underline u^m(\underline u^\gamma-\overline u^\gamma)
+\lambda\sigma \underline u^\alpha (\xi^\beta-\underline u^\beta),
\end{align}
with
\begin{align}\label{odeIC}
\overline{u}(0)&>\overline{u}_0:=\max\{\max_{x\in\overline{\Omega}}u_0,\xi\},
\nonumber\\
\underline{u}(0)&>\underline{u}_0:=\inf\{\min_{x\in\overline{\Omega}}u_0,\xi\}.
\end{align}

We have the following result:

\begin{thm}[Convergence to the equilibrium]\label{thm2}
Let the assumptions of Theorem \ref{thm1} hold and $\underline u_0,\overline u_0$ as in \eqref{odeIC} to satisfy $0<\underline u_0\leq\xi\leq\overline u_0$ with 
$
\alpha+\beta\geq \gamma+m$, and $\lambda>2\chi,
$
then
$$
(u,c)\to(\xi,\xi)\quad\text{as}\quad t\to\infty.
$$
\end{thm}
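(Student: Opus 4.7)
The plan is a rectangle/invariance argument in three movements: invoke Theorem \ref{thm1} for global regularity, trap the PDE solution between two spatially homogeneous envelopes solving the ODE system \eqref{odesyst}, and show that these envelopes contract to the common value $\xi$; convergence of $c$ then follows from elliptic regularity applied to the second equation.

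First, under the standing hypotheses Theorem \ref{thm1} delivers a unique global classical solution $(u,c)$ with a uniform $L^\infty$ bound, which confines the forthcoming analysis to a bounded regime and supplies the parabolic regularity needed to justify pointwise maximum-principle arguments. Second, I would prove the sandwich bound
\[
\underline u(t)\leq u(x,t)\leq \overline u(t),\qquad (x,t)\in\Omega\times[0,\infty),
\]
by a first-crossing argument. Expanding the chemotaxis drift via $\Delta c=c-u^\gamma$ rewrites it as $-\chi m u^{m-1}\nabla u\cdot\nabla c+\chi u^m(u^\gamma-c)$; at a putative first-contact point $(x_0,t_0)$ with $u(x_0,t_0)=\overline u(t_0)$ one has $\nabla u(x_0,t_0)=0$ and $\Delta u(x_0,t_0)\leq 0$. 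Elliptic comparison on $-\Delta c+c=u^\gamma$ gives $\underline u(t)^\gamma\leq c(\cdot,t)\leq\overline u(t)^\gamma$ throughout the slab $[0,t_0]$, and the inductive trap implies $\underline u^\beta\leq\dashint_\Omega u^\beta\leq\overline u^\beta$; matching the resulting algebraic inequality against \eqref{odesyst} yields $u_t(x_0,t_0)\leq\overline u_t(t_0)$, which together with the strict initialisation \eqref{odeIC} rules out the crossing. The mirror argument handles the lower envelope.

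Third, the core step is to show $\overline u(t),\underline u(t)\to\xi$ for the ODE system. Since $(\xi,\xi)$ is the unique equilibrium of \eqref{odesyst} inside $\{0<\underline u\leq\xi\leq\overline u\}$, I would introduce $V(t):=\overline u(t)-\underline u(t)\geq 0$ and compute
\[
V_t=\chi(\overline u^m+\underline u^m)(\overline u^\gamma-\underline u^\gamma)-\lambda\sigma\bigl[\overline u^\alpha(\overline u^\beta-\xi^\beta)+\underline u^\alpha(\xi^\beta-\underline u^\beta)\bigr].
\]
The aggregation (first) term is non-negative and of polynomial order $m+\gamma$ in the envelopes, while the dampening bracket is non-negative and of polynomial order $\alpha+\beta$. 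Mean-value estimates reduce both to products of $V$ times suitable powers of $\overline u,\underline u$: the hypothesis $\alpha+\beta\geq\gamma+m$ matches the polynomial orders, while $\lambda>2\chi$ provides the quantitative margin (the factor $2$ coming from the combination $\overline u^m+\underline u^m$). Together with the Theorem \ref{thm1} upper bound and an invariance argument keeping $\underline u$ bounded away from $0$, these estimates yield a differential inequality forcing $V(t)\to 0$, and an $\omega$-limit/monotonicity argument pins the common limit at $\xi$. Finally, $u(\cdot,t)\to\xi$ uniformly, and elliptic regularity for $-\Delta c+c=u^\gamma$ yields $c(\cdot,t)\to\xi^\gamma$, which is the corresponding equilibrium value for the chemical.

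The main obstacle is the ODE convergence in the third step: quantitatively balancing the destabilising aggregation contribution against the stabilising nonlocal reaction. The critical-exponent assumption $\alpha+\beta\geq\gamma+m$ and the quantitative margin $\lambda>2\chi$ are used precisely at this balance, and non-extinction of $\underline u$ must also be secured via rectangle invariance; once $V\to 0$ is obtained, the identification of the common limit with $\xi$ follows from the $\omega$-limit structure of \eqref{odesyst}.
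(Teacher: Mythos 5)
Your overall architecture is the one the paper uses: Theorem \ref{thm1} for global boundedness, the ODE system \eqref{odesyst} as a spatially homogeneous rectangle, a sandwich lemma $\underline u(t)\le u(x,t)\le\overline u(t)$, and contraction of the envelopes to $\xi$. The two places where you diverge technically are the sandwich lemma (you propose a pointwise first-crossing/maximum-principle argument; the paper tests the equations for $\overline U=u-\overline u$ and $\underline U=u-\underline u$ with $\overline U_+$, $\underline U_-$ and closes with Gr\"onwall) and the ODE contraction (you track $V=\overline u-\underline u$; the paper tracks $\log(\overline u/\underline u)$ and Gr\"onwall). These substitutions are in principle legitimate, and your remark that the honest limit of $c$ is $\xi^\gamma$ rather than $\xi$ is a point worth flagging against the statement of \eqref{equi}.

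However, the first-crossing step does not close as you describe it, and this is a genuine gap. At a first upper contact point $(x_0,t_0)$ you correctly get $\Delta u\le0$, $\nabla u=0$, $c(x_0,t_0)\ge\underline u^\gamma(t_0)$, and $\underline u^\beta\le\dashint_\Omega u^\beta\,dx\le\overline u^\beta$. But the reaction term in \eqref{nkpp00} enters $u_t(x_0,t_0)$ as $\lambda\overline u^\alpha\bigl(1-\sigma\dashint_\Omega u^\beta\,dx\bigr)$, whose \emph{worst case} under the trap is $\lambda\sigma\overline u^\alpha(\xi^\beta-\underline u^\beta)\ge0$, whereas the $\overline u$-equation in \eqref{odesyst} carries $\lambda\sigma\overline u^\alpha(\xi^\beta-\overline u^\beta)\le0$. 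Hence the best available bound is
\begin{equation*}
u_t(x_0,t_0)-\overline u'(t_0)\le\lambda\sigma\overline u^\alpha\bigl(\overline u^\beta-\underline u^\beta\bigr),
\end{equation*}
which has the wrong sign: the claimed inequality $u_t(x_0,t_0)\le\overline u'(t_0)$ does not follow, because the nonlocal average need not be close to $\overline u^\beta$ at the contact time. To salvage a pointwise comparison you would have to build the unfavourable coupling into the envelope system (e.g.\ put $\xi^\beta-\underline u^\beta$ in the $\overline u$-equation), which then changes the contraction analysis; the paper instead avoids the pointwise argument altogether by the $L^2$ positive/negative-part computation. Separately, in your third step the inequality $V_t\le\bigl[2\gamma\chi\,\overline u^{\,m+\gamma-1}-\lambda\sigma\beta\,\underline u^{\,\alpha+\beta-1}\bigr]V$ is only dissipative if $\lambda\sigma\beta\underline u^{\,\alpha+\beta-1}$ actually dominates $2\gamma\chi\overline u^{\,m+\gamma-1}$; the hypotheses $\alpha+\beta\ge\gamma+m$ and $\lambda>2\chi$ alone do not give this without controlling the constants $\sigma,\beta,\gamma$ and the ratio of the envelopes, so this step needs to be made quantitative (the paper buries the same issue in the constant of its bound on $g(\underline u,\overline u)$).
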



This paper is organised as follows: we prove Theorems \ref{thm1},\ref{thm2} in Sections 2,3 respectively.



\section{Proof of Theorem \ref{thm1} (Global Existence)}\label{sec2}

In this Section we  prove Theorem \ref{thm1} in two steps translated into the following two Propositions but before going further we show some elementary results that we will use in this proof.

First, a Lemma which combines Gagliardo-Nirenberg and interpolation inequalities.
\begin{lemma}(\cite{BCL18}) \label{lemmainterpolation}
Let  $1 \le r<q<2^*:=\frac{2n}{n-2}$ and $\frac{q}{r}<\frac{2}{r}+1-\frac{2}{2^*}$, then for $v\in H^1(\Omega)$ and $v \in L^r(\Omega)$, it holds
\begin{align}
\|v\|_{L^q(\Omega)}^q 
\le 
C_{GNI}
\|v\|_{L^r(\Omega)}^{\gamma} 
+ C_0 
\|\nabla v\|_{L^2(\Omega)}^2
+ C_1 \|v\|_{L^2(\Omega)}^2,~~n \ge 3,
\label{inter}
\end{align}
where
$
C_{GNI}=C(n) \left(C_0^{-\frac{ \lambda q}{2-\lambda q}}  + C_1^{-\frac{ \lambda q}{2-\lambda q}}\right)
$, $C(n)$ is a constant depending on $n$ while $C_0,C_1$ are arbitrarily positive constants and
\begin{align}
\lambda=\frac{\frac{1}{r}-\frac{1}{q}}{\frac{1}{r}-\frac{1}{2^*}} \in (0,1),~~
\gamma=\frac{2(1-\lambda) q}{2-\lambda q}=\frac{2\left(1-\frac{q}{2^*}\right)}{\frac{2-q}{r}-\frac{2}{2^*}+1}.
\end{align}
\end{lemma}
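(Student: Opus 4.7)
The plan is to derive this from the standard Gagliardo--Nirenberg interpolation inequality combined with Young's inequality, with careful bookkeeping of the constants so that the stated form of $C_{GNI}$ falls out.

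First I would invoke the classical Gagliardo--Nirenberg inequality on the bounded domain $\Omega$: for $v\in H^1(\Omega)\cap L^r(\Omega)$,
\begin{equation*}
\|v\|_{L^q(\Omega)}\leq C\,\|v\|_{H^1(\Omega)}^{\lambda}\,\|v\|_{L^r(\Omega)}^{1-\lambda},
\end{equation*}
where $\lambda\in(0,1)$ is determined by the scaling relation
\begin{equation*}
\frac{1}{q}=\lambda\left(\frac{1}{2}-\frac{1}{n}\right)+(1-\lambda)\frac{1}{r}=\lambda\cdot\frac{1}{2^{*}}+(1-\lambda)\cdot\frac{1}{r},
\end{equation*}
which rearranges exactly to the formula $\lambda=(1/r-1/q)/(1/r-1/2^{*})$ stated in the lemma. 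The assumption $r<q<2^{*}$ guarantees $\lambda\in(0,1)$, so the GN inequality is applicable.

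Next, raising to the $q$-th power gives
\begin{equation*}
\|v\|_{L^q(\Omega)}^{q}\leq C\,\|v\|_{H^1(\Omega)}^{\lambda q}\,\|v\|_{L^r(\Omega)}^{(1-\lambda)q}.
\end{equation*}
A short computation shows that the hypothesis $q/r<2/r+1-2/2^{*}$ is equivalent to $\lambda q<2$, which is precisely the condition needed for Young's inequality with conjugate exponents $p_{1}=2/(\lambda q)$ and $p_{2}=2/(2-\lambda q)$ to produce a clean power on $\|v\|_{L^{r}}$. Writing $\|v\|_{H^1}^{\lambda q}=(\|\nabla v\|_{L^2}^{2}+\|v\|_{L^2}^{2})^{\lambda q/2}$ and using the elementary bound $(a+b)^{s}\leq c_{s}(a^{s}+b^{s})$ with $s=\lambda q/2$, I would split the right-hand side into two terms, one involving $\|\nabla v\|_{L^2}^{\lambda q}$ and one involving $\|v\|_{L^2}^{\lambda q}$.

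For each of these two terms I apply Young's inequality separately, using a small parameter $\varepsilon_{0}$ (respectively $\varepsilon_{1}$) to absorb a full power of $\|\nabla v\|_{L^2}^{2}$ (respectively $\|v\|_{L^2}^{2}$). This yields
\begin{equation*}
\|\nabla v\|_{L^2}^{\lambda q}\,\|v\|_{L^r}^{(1-\lambda)q}\leq \varepsilon_{0}\,\|\nabla v\|_{L^2}^{2}+C(\varepsilon_{0})\,\|v\|_{L^r}^{\gamma},
\end{equation*}
with the conjugate-exponent computation forcing $C(\varepsilon_{0})\sim\varepsilon_{0}^{-\lambda q/(2-\lambda q)}$ and the $L^{r}$-exponent to be $\gamma=2(1-\lambda)q/(2-\lambda q)$, matching the statement. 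Choosing $\varepsilon_{0}$ and $\varepsilon_{1}$ so that after absorbing the constants $c_{s}$ and $C$ the coefficients in front of $\|\nabla v\|_{L^2}^{2}$ and $\|v\|_{L^2}^{2}$ become exactly $C_{0}$ and $C_{1}$ gives the claim with
\begin{equation*}
C_{GNI}\leq C(n)\,\bigl(C_{0}^{-\lambda q/(2-\lambda q)}+C_{1}^{-\lambda q/(2-\lambda q)}\bigr).
\end{equation*}

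The only non-routine step is the algebraic verification that the structural hypothesis $q/r<2/r+1-2/2^{*}$ is equivalent to $\lambda q<2$, since without that equivalence one cannot apply Young's inequality to produce a finite $L^r$-power on the right-hand side; everything else is a careful tracking of exponents and constants.
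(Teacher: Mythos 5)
Your argument is correct, and all the exponent bookkeeping checks out: the scaling relation does give the stated $\lambda$, the hypothesis $q/r<2/r+1-2/2^{*}$ is indeed equivalent to $\lambda q<2$ (since $\lambda q=(q/r-1)/(1/r-1/2^{*})$), and Young's inequality with exponents $2/(\lambda q)$ and $2/(2-\lambda q)$ produces exactly the exponent $\gamma=2(1-\lambda)q/(2-\lambda q)$ and the blow-up rate $C_{0}^{-\lambda q/(2-\lambda q)}$, $C_{1}^{-\lambda q/(2-\lambda q)}$ of the constant. The paper itself gives no proof of this lemma -- it is imported verbatim from \cite{BCL18} -- so there is nothing to compare against line by line, but your Gagliardo--Nirenberg plus Young route is the standard derivation and is the one carried out in that reference.
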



Next, we pass to the proof of the global existence Theorem 2. 
It consists of two Propositions: the first one proves the local-in-time existence of a solution and the second one shows the necessary conditions on the parameters in order to get the main $L^k-$estimate, which is then used to derive the $L^\infty-$bounds after a bootstrap procedure.
First, we show the local existence of the solution to \eqref{nkpp00}.
\begin{proposition}\label{pro0}
Let $\alpha \ge 1.$ Assume $u_0\in C^{\theta}(\overline{\Omega})$ for some $\theta \in (0,1)$, then there exists a maximal existence time $T_{\max} \in (0,\infty]$ and a unique classical solution $u(x,t)$ to model \eqref{nkpp00} such that
\begin{align}
u \in C^{2+\delta,~1+\frac{\delta}{2}}\left( \overline{\Omega}\times(0,T_{\max}) \right),
\end{align}
where $\delta \in (0,1)$. Besides, if $T_{\max}<\infty,$ then
\begin{align}\label{blowup}
\displaystyle \lim_{t \to T_{\max}} \|u(t)\|_{L^\infty(\Omega)}=\infty.
\end{align}
\end{proposition}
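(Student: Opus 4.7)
The plan is to establish local existence by a standard fixed-point argument, treating the elliptic equation for $c$ as resolving $c$ in terms of $u$, so that the first equation becomes a quasilinear parabolic equation with a nonlocal lower-order term. Concretely, for fixed $T>0$ and $R>0$ I would define the closed set
\[
X_{T,R}=\{v\in C^0(\overline{\Omega}\times[0,T]):\,v(\cdot,0)=u_0,\ \|v\|_{L^\infty}\le R\},
\]
and the map $\mathcal{F}:v\mapsto u$ obtained as follows: first solve the elliptic problem $-\Delta c+c=v^\gamma$ with Neumann boundary condition; elliptic $L^p$ and Schauder theory give $c[v]\in W^{2,p}$ and hence $\nabla c[v]$ is at least H\"older continuous in $x$, with norms controlled by $\|v\|_{L^\infty}$. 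Then solve the linear parabolic problem $u_t=\Delta u-\chi\nabla\cdot(v^m\nabla c[v])+\lambda f(v)$ with Neumann data and initial condition $u_0$, which, noting that the nonlocal integral $\dashint_\Omega v^\beta$ is a bounded function of $t$ alone, falls into the framework of Ladyzhenskaya--Solonnikov--Ural'tseva.

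Choosing $R$ slightly larger than $\|u_0\|_{L^\infty}$ and $T$ sufficiently small, standard parabolic $L^p$ estimates keep $\mathcal{F}(v)$ inside $X_{T,R}$, and a Lipschitz computation (using that $v\mapsto v^m$, $v\mapsto v^\gamma$, $v\mapsto f(v)$ are locally Lipschitz on bounded sets, plus the continuity of the solution operator $v^\gamma\mapsto\nabla c[v]$) shows that $\mathcal{F}$ is a contraction on $X_{T,R}$. The Banach fixed-point theorem then yields a unique continuous solution on a small time interval.

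Next I would upgrade regularity by bootstrap. Since $u\in L^\infty$ on $[0,T]$, the source $u^\gamma$ is bounded, so by elliptic Schauder estimates $c\in C^{2+\theta}$ in $x$ uniformly in $t$, whence $v^m\nabla c$ is H\"older in $x$. The parabolic equation for $u$ then has H\"older coefficients and right-hand side, and parabolic Schauder theory (e.g. Theorem IV.5.3 in Ladyzhenskaya--Solonnikov--Ural'tseva) gives $u\in C^{2+\delta,1+\delta/2}(\overline{\Omega}\times(0,T_{\max}))$ for some $\delta\in(0,1)$. Uniqueness of the classical solution on any interval of existence follows from a standard energy/Gronwall estimate on the difference of two solutions.

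Finally, for the blow-up criterion \eqref{blowup} I would argue by contradiction: define $T_{\max}$ as the supremum of times of existence and assume $T_{\max}<\infty$ while $\limsup_{t\to T_{\max}}\|u(\cdot,t)\|_{L^\infty}<\infty$. Then the above Schauder bounds hold uniformly up to $T_{\max}$, so $u(\cdot,T_{\max})$ is well-defined in $C^\theta(\overline\Omega)$ and one may restart the local existence argument with this initial datum, extending the solution past $T_{\max}$ and contradicting its maximality. The principal technical point is the handling of the cross-diffusion term $\nabla\cdot(u^m\nabla c)$ in the contraction estimate, since it couples the $W^{1,\infty}$-type regularity of $\nabla c$ to powers of $u$; this is the standard Keller--Segel obstacle and is resolved by absorbing $\nabla c$ into the elliptic estimate for $c[v]$ in $W^{2,p}$ with $p$ large.
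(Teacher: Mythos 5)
Your proposal is correct and follows essentially the same route as the proof the paper relies on: the paper itself only cites \cite{BCL18} (where the case $m=\gamma=1$ is treated) and asserts the general case is analogous, and that reference carries out exactly the standard scheme you describe --- resolve $c$ from the elliptic equation, run a Banach fixed-point argument on a ball in $C^0(\overline{\Omega}\times[0,T])$ for small $T$, upgrade to $C^{2+\delta,1+\delta/2}$ by elliptic and parabolic Schauder bootstrap, and obtain the extensibility criterion \eqref{blowup} by restarting from $u(\cdot,T_{\max})$ if the $L^\infty$ norm stays bounded. The only point worth flagging is that the contraction estimate for the flux term is cleanest if the fixed-point map is set up via the Neumann heat semigroup (variation-of-constants) so that $\nabla\cdot(v^m\nabla c[v])$ is handled by the smoothing of $e^{t\Delta}\nabla\cdot$ rather than requiring differentiability of $v$, but this is a routine adjustment.
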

\begin{proof}
The proof for the case $m=\gamma=1$ is in \cite{BCL18} and since it is similar to our general case we refer the interested reader to this work.	
\end{proof}

Next, we have the main ingredient of our global existence result, which is the following $L^k-$estimates:
\begin{proposition}\label{pro1}
Let $\alpha \ge 1$ and $u$ be any nonnegative classical solution of problem \eqref{nkpp00} within $0<t<T_{\max}$. If
\begin{align}\label{conditions}
\gamma+m \le \alpha <1+2 \beta/n
\quad\text{or}\quad
\frac{n+4}{2}-\beta<\alpha<\gamma+m,
\end{align}
then, the following estimate holds true that for any $0<t<T_{\max}$ and any $1 \le k < \infty$
\begin{align}
\int_{\Omega} u^k(\cdot,t) dx \le C\left( \|u_0\|_{L^1(\Omega)},\|u_0\|_{L^k(\Omega)}  \right).
\end{align}
Furthermore, the uniformly boundedness is obtained that for $0<t<T_{\max}$
\begin{align}\label{uniformbounded}
\|u(\cdot,t)\|_{L^\infty(\Omega)} \le C\left( \|u_0\|_{L^1(\Omega)},\|u_0\|_{L^\infty(\Omega)} \right).
\end{align}
\end{proposition}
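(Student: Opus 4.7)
The plan is to derive an $L^k$-differential inequality, absorb the chemotactic contribution through a combination of the nonlocal dampening and the diffusive dissipation, and finally bootstrap to $L^\infty$. I would begin by testing the $u$-equation against $ku^{k-1}$, integrating by parts, and using the second equation to rewrite
\[
k(k-1)\chi\int_\Omega u^{k+m-2}\nabla u\cdot\nabla c\,dx
=\frac{k(k-1)\chi}{k+m-1}\int_\Omega u^{k+m-1}(u^\gamma-c)\,dx;
\]
since $c\geq 0$, the term $-\frac{k(k-1)\chi}{k+m-1}\int u^{k+m-1}c$ can be dropped to yield
\[
\frac{d}{dt}\int_\Omega u^k\,dx+\frac{4(k-1)}{k}\int_\Omega|\nabla u^{k/2}|^2\,dx
\leq \frac{k(k-1)\chi}{k+m-1}\int_\Omega u^{k+m+\gamma-1}\,dx+k\lambda\int_\Omega u^{k+\alpha-1}\,dx-\frac{k\lambda\sigma}{|\Omega|}\int_\Omega u^{k+\alpha-1}\,dx\int_\Omega u^\beta\,dx.
\]

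An a priori $L^1$-bound follows by taking $k=1$ and applying Jensen's inequality $\dashint u^\beta\geq(\dashint u)^\beta$: whenever $\dashint u>\sigma^{-1/\beta}$, the right-hand side of $\frac{d}{dt}\int u$ is nonpositive, so $\|u(\cdot,t)\|_{L^1(\Omega)}\leq\max\{\|u_0\|_{L^1(\Omega)},\,|\Omega|\sigma^{-1/\beta}\}$. In Case~1 ($\gamma+m\leq\alpha<1+2\beta/n$), the chemotactic exponent satisfies $k+m+\gamma-1\leq k+\alpha-1$, so Young's inequality reduces the task to absorbing $\int u^{k+\alpha-1}$ by the dampening plus dissipation. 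Splitting according to whether $\sigma\dashint u^\beta$ is above or below a fixed threshold handles both regimes: when large, the dampening directly dominates $k\lambda\int u^{k+\alpha-1}$; when small, the induced $L^\beta$-bound combined with Lemma~\ref{lemmainterpolation} applied to $v=u^{k/2}$ with $q=2(k+\alpha-1)/k$ and an $r$ matched to the $L^1$-bound allows the dissipation to absorb $\int u^{k+\alpha-1}$ via Young's inequality. The hypothesis $\alpha<1+2\beta/n$ is precisely what forces the Gagliardo-Nirenberg parameter $\lambda q<2$.

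In Case~2 ($\frac{n+4}{2}-\beta<\alpha<\gamma+m$) the chemotactic integral $\int u^{k+m+\gamma-1}$ is the dominant positive term. I would apply Lemma~\ref{lemmainterpolation} to $v=u^{k/2}$ with $q=2(k+m+\gamma-1)/k$, choosing $r$ so that the lower $L^r$-norm of $v$ corresponds either to the $L^1$-bound of Step~2 or, for larger $k$, to an $L^{k'}$-bound obtained in a previous stage of a bootstrap, while the remaining power of $u$ coming from the gradient side is controlled against $\int u^{k+\alpha-1}\int u^\beta$. The arithmetic hypothesis $\alpha+\beta>(n+4)/2$ is what simultaneously guarantees the admissibility $q/r<\frac{2}{r}+1-\frac{2}{2^*}$ and $\lambda q<2$, so that $\|\nabla u^{k/2}\|_{L^2}^2$ is absorbed by the diffusion and a Gr\"onwall argument produces $\sup_t\int_\Omega u^k(\cdot,t)\,dx\leq C(\|u_0\|_{L^1(\Omega)},\|u_0\|_{L^k(\Omega)})$ for every finite $k$. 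Once this holds, $W^{2,p}$ elliptic regularity applied to $-\Delta c+c=u^\gamma$ with $p$ arbitrarily large yields $\|\nabla c\|_{L^\infty}\leq C$, and a Moser-Alikakos iteration on the resulting drift-diffusion equation upgrades the bound to \eqref{uniformbounded}, which via the continuation criterion \eqref{blowup} excludes finite-time blow-up.

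The main obstacle will be the uniform-in-$k$ bookkeeping of the Gagliardo-Nirenberg parameters in Case~2, together with a subtle mismatch: the dampening naturally supplies the product $\int u^{k+\alpha-1}\int u^\beta$ rather than the single integral $\int u^{k+\alpha-1+\beta}$ that would appear most cleanly on the interpolation side. Bridging the two by Cauchy-Schwarz/Jensen and the $L^1$-bound, without breaking either the GN admissibility or the condition $\lambda q<2$, is what makes the sharp constraint $\alpha+\beta>\frac{n+4}{2}$ indispensable and constitutes the technical heart of the proof.
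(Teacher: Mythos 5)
Your plan coincides with the paper's own proof: testing with $ku^{k-1}$, converting the chemotaxis term via $-\Delta c+c=u^\gamma$ into $\frac{\chi k(k-1)}{k+m-1}\int_\Omega u^{k+m-1}(u^\gamma-c)\,dx$, splitting into the cases $\alpha\ge\gamma+m$ (Young's inequality plus the nonlocal dampening) and $\alpha<\gamma+m$ (Lemma~\ref{lemmainterpolation} with $q=2(\gamma+m+k-1)/k$, followed by interpolating the resulting $L^{k'}$-norm between $L^{k+\alpha-1}$ and $L^{\beta}$ to absorb it into the product $\int_\Omega u^{k+\alpha-1}dx\int_\Omega u^{\beta}dx$, which is exactly where $\alpha+\beta>\frac{n+4}{2}$ enters), and finally a Moser-type iteration for \eqref{uniformbounded}. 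The only detail left implicit in your sketch is the explicit choice $k'=\frac{k+\alpha-1+\beta}{2}$ that kills the leftover $L^{\beta}$ factor in the interpolation, but you correctly identify that bridge as the technical heart, so the argument is essentially the paper's.
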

\begin{proof}

\noindent\textbf{Proof of Proposition \ref{pro1}.} We begin with the boundedness of $L^k$ norm for $1<k<\infty.$

We test \eqref{nkpp00} with $k u^{k-1}(k \ge 1)$ and derive: 
\begin{align}\label{star}
& \frac{d}{dt} \int_{\Omega}  u^k dx  
+ \frac{4(k-1)}{k}\int_{\Omega} |\nabla u^{\frac{k}{2}} |^2 dx 
=
\lambda k \int_\Omega u^{k-1}f(u)dx
-\chi k\int_\Omega u^{k-1}\nabla(u^m\nabla c) dx
\end{align}
For the last term on using \eqref{nkpp00}, we get:
\begin{align}
-\chi k\int_\Omega u^{k-1}\nabla(u^m\nabla c) dx
&=
\frac{\chi k m}{k+m-1}\int_\Omega u^{k+m-1}(u^\gamma-c)dx
-\chi k\int_\Omega u^{k+m-1}(u^\gamma-c)dx,\nonumber\\
&=\chi k\left(\frac{m}{k+m-1}-1\right)\int_\Omega u^{k+m-1}(u^\gamma-c)dx
\end{align}
inserting the above into \eqref{star} we get:
\begin{align}\label{starstar}
 \frac{d}{dt} \int_{\Omega}  u^k dx  
+ \frac{4(k-1)}{k}
\int_{\Omega} |\nabla u^{\frac{k}{2}} |^2 dx 
&=
\lambda k \int_\Omega u^{k-1} f(u) dx
+C_1\int_\Omega u^{\gamma+k+m-1}dx
-C_1\int_\Omega cu^{k+m-1}dx
\nonumber\\
&\leq
\lambda k \int_\Omega u^{\alpha+k-1}\left(1-\int_\Omega u^\beta dx\right)  dx
+C_1\int_\Omega u^{\gamma+k+m-1}dx
\end{align}
where $C_1:=\chi k\left(1-\frac{m}{k+m-1}\right)\geq0$. At this point we distinguish two cases in order to determine which one is the dominant term of the right hand side.

\underline{Case 1: $\alpha\geq \gamma+m(\geq2)\Rightarrow \alpha+k-1\geq\gamma +m+k-1$.
}\\
By Young's inequality we get:
$$
\int_\Omega u^{\gamma+m+k-1}dx\leq\int_\Omega u^{\alpha+k-1} dx+C_2(\alpha,\gamma,m,k)
$$
therefore \eqref{starstar} becomes:
\begin{align}\label{s3}
 \frac{d}{dt} \int_{\Omega}  u^k dx  
+ \frac{4(k-1)}{k}
\int_{\Omega} |\nabla u^{\frac{k}{2}} |^2 dx 
+\lambda k\int_\Omega u^{\alpha+k-1}\;dx\int_\Omega u^\beta\;dx 
&\leq
C_3(k)\int_\Omega u^{\alpha+k-1}dx
+C_2'
\end{align}
and following similar ideas as in \cite{BCL15,BCL18},  for 
$$
 1+\frac{2\beta}{n}>\alpha\geq \gamma+m
$$ 
we get that for all $k\geq1$ and $t\in(0,T_{\max})$ there holds:
$$
\|u(\cdot,t)\|_k^k\leq C_4(\|u_0\|^k_k,k).
$$

\underline{Case 2: $\alpha< \gamma+m\Rightarrow \alpha+k-1<\gamma +m+k-1$.}
Similarly to the previous case \eqref{starstar} becomes:
\begin{align}\label{s4}
 \frac{d}{dt} \int_{\Omega}  u^k dx  
+ \frac{4(k-1)}{k}
\int_{\Omega} |\nabla u^{\frac{k}{2}} |^2 dx 
+\lambda k\int_\Omega u^{\alpha+k-1}\;dx\int_\Omega u^\beta\;dx 
&\leq
C_5(k)\int_\Omega u^{\gamma+m+k-1}dx
+C_6.
\end{align}

We will estimate the right hand side by applying Lemma \ref{lemmainterpolation} for 
$$~v=u^{k/2},q=\frac{2(\gamma+m+k-1)}{k},r=\frac{2k'}{k}>1,C_0=\frac{2(k-1)}{k ~C_5(k) },C_1=\frac{1}{C_5(k)}.$$

The condition $q\leq2^*$ gives  $k\geq\frac{n-2}{2}(\gamma+m-1)$ and since we started with $k>1$, this condition makes sense only if 
\begin{align}\label{kinterval}
k>\max\{1,\frac{n-2}{2}(\gamma+m-1)\}.
\end{align}

The other condition $\frac{q}{r}<\frac{2}{r}+1-\frac{2}{2^*},$ provides:
\begin{align}\label{kprime1}
k'>\frac{n(\gamma+m-1)}{n-2}
\end{align}
 and $1<r<q<2^*$ translates into
\begin{align}\label{kprime}
\frac{k}{2}<k'<\gamma+m+k+1<\frac{2^*k}{2}.
\end{align}
Then, Lemma \ref{lemmainterpolation} calculates:
\begin{align}\label{12}
\int_{\Omega} u^{\gamma+m+k+1} dx 
\le 
\frac{2(k-1)}{k ~C_5(k)} \| \nabla u^{\frac{k}{2}} \|_{2}^2
+C_{GNI} \|u\|_{k'}^{b}
+\frac{1}{C_5(k)} \|u^{k/2} \|_{2}^2
\end{align}
with
$$
b=\frac{(1-\lambda)(k+1)}{1-\frac{\lambda (k+1)}{k}},~\lambda=\frac{\frac{k}{2k'}-\frac{k}{2(k+1)}}{\frac{k}{2k'}-\frac{1}{2^*}} \in(0,1).
$$

Combining \eqref{s4} with \eqref{12} we obtain that
\begin{align}\label{13}
& \frac{d}{dt}\int_{\Omega} u^k \;dx
+\frac{2(k-1)}{k} \|\nabla u^{\frac{k}{2}} \|_{L^2(\Omega)}^2
+ \lambda k  \int_{\Omega} u^{\alpha+k-1} \;dx\int_{\Omega} u^\beta \;dx 
\nonumber \\
\le ~& 
C_5C_{GNI}\|u\|_{L^{k'}(\Omega)}^{b}+\|u\|_{L^k(\Omega)}^k+C_7.
\end{align}

At this point we aim to bound the $\|u\|_{L^{k'}}$, which appears on the right hand side, in terms of the term $\int_{\Omega} u^{\alpha+k-1} \;dx\int_{\Omega} u^\beta \;dx $, which lies on the left hand side, and afterwards the term $\|u\|_{L^k(\Omega)}^k$ should be handled by the diffusion term. 

We procceed with the first task and since,
$$
\beta<k'<k+\alpha-1,
$$
we use the following interpolation inequality
\begin{align}\label{14}
\|u\|_{k'}^{b} \le \left(\|u\|_{k+\alpha-1}^{k+\alpha-1} \|u\|_{\beta}^\beta \right)^{\frac{b \theta}{k+\alpha-1}} \|u\|_{\beta}^{b \left( 1-\theta-\frac{\theta \beta}{k+\alpha-1} \right)}
\end{align}
with
$$
\theta=\frac{\frac{1}{\beta}-\frac{1}{k'}}{\frac{1}{\beta}-\frac{1}{k+\alpha-1}} \in (0,1)
$$
to deal with \eqref{13}. Now, due to the arbitrariness of $k'$, we can specify it at this point so that:
$$
k'=\frac{k+\alpha-1+\beta}{2}
$$
such that
$$
1-\theta-\frac{\theta \beta}{k+\alpha-1}=0,
$$
and in addition if
\begin{align}\label{17}
\frac{b \theta }{k+\alpha-1}<1,
\end{align}
then we may use Young's inequality in order to infer from \eqref{14} that
\begin{align}\label{23}
C(k) \|u\|_{L^{k'}(\Omega)}^{b} & \le C(k) \left(\|u\|_{L^{k+\alpha-1}(\Omega)}^{k+\alpha-1} \|u\|_{L^\beta(\Omega)}^\beta \right)^{\frac{b \theta}{k+\alpha-1}} \nonumber\\
& \le \frac{k}{4}\|u\|_{L^{k+\alpha-1}(\Omega)}^{k+\alpha-1} \|u\|_{L^\beta(\Omega)}^\beta +C_8(k,\alpha).
\end{align}
 At this point it remains to show that  the term $\|u\|_{L^k(\Omega)}^k$ is handled by the diffusion term, so we may continue as in \cite{BCL18} Proposition 5, and prove that \eqref{17} is equivalent to the following condition:

After a few computations relation \eqref{17} is equivalent to

\begin{align}\label{19}
\frac{(2-\alpha)}{\beta}\left( \frac{k}{2}-\frac{k'}{2^*} \right)<(k+1-k')~\left( \frac{1}{2}-\frac{1}{2^*}-\frac{\alpha-1}{2 \beta} \right).
\end{align}

To simplify the following calculation we denote 
\begin{align*}
A_0=\frac{1}{2}-\frac{1}{p}-\frac{\alpha-1}{2 \beta}>0 \quad\text{and}\quad
A_1=\frac{2-\alpha}{\beta}>0.
\end{align*}
Therefore, relation \eqref{19} can be rewritten as
\begin{align}\label{20}
A_0(k+1-k')>A_1 \left( \frac{k}{2}-\frac{k'}{p}  \right)
\quad\text{or}\quad
A_0+\left(A_0- \frac{A_1}{2} \right)k+\left( \frac{A_1}{p}-A_0 \right)k'>0.
\end{align}
At this point we will clarify the ordering of the terms $A_0,\frac{A_1}{p},\frac{A_1}{2}$. If $A_0<\frac{A_1}{p}<\frac{A_1}{2}$, then \eqref{20} reads
\begin{align}\label{61}
A_0+\left( \frac{A_1}{p}-A_0 \right)k'>\left(\frac{A_1}{2}-A_0\right)k
\end{align}
and since $k'$ satisfies \eqref{kprime}, plugging $k'<\frac{2^*k}{2}$ into \eqref{61} provides
\begin{align}
\left(\frac{A_1}{2}-A_0\right)k< A_0+\left( \frac{A_1}{2^*}-A_0 \right)k'<A_0+\left( \frac{A_1}{2^*}-A_0 \right)\frac{2^*k}{2},
\end{align}
which contradicts relation \eqref{kinterval}.

Next, if $A_0>\frac{A_1}{2}>\frac{A_1}{p},$ again by using relation \eqref{kprime} we take $k'>k/2$ and relation \eqref{20} becomes
\begin{align}\label{247}
A_0+\left(A_0- \frac{A_1}{2} \right)k>\left( A_0-\frac{A_1}{2^*} \right)k'>\left( A_0-\frac{A_1}{2^*} \right) \frac{k}{2}.
\end{align}
Here we remark that if we use $k'>\frac{n(\gamma+m-1)}{n-2}$ from \eqref{kprime1} instead, then \eqref{247} reads
\begin{align}
A_0+\left(A_0- \frac{A_1}{2} \right)k>\left( A_0-\frac{A_1}{p} \right)k'>\left( A_0-\frac{A_1}{p} \right) \frac{n(\gamma+m-1)}{n-2},
\end{align}
which is equivalent to \eqref{kinterval}, thus we only need to take under consideration the case $k'>\frac{k}{2}$. 

Therefore, \eqref{17} holds true as long as $A_0-\frac{A_1}{2}>\frac{A_0}{2}-\frac{A_1}{2p},$ which translates into
\begin{align}\label{22}
\frac{(2-\alpha)}{\beta} \left(1-\frac{1}{p}\right) < \frac{1}{2}-\frac{1}{p}-\frac{\alpha-1}{2 \beta}\Leftrightarrow \alpha+\beta>\frac{n+4}{2}.
\end{align}

and again following along the lines of \cite{BCL18} we conclude that also in this case, for all $1 \le k<\infty$
\begin{align}\label{etada}
\|u(\cdot,t)\|_{L^k(\Omega)} \le C\left(k,\|u_0\|_{L^k(\Omega)}\right).
\end{align}
In a similar way, as in \cite{BCL18} step 4 of Proposition 5 we are able to get our $L^\infty-$estimates via the same iteration scheme and thus prove \eqref{uniformbounded}.

\end{proof}
We can now prove  our first result:\\
\noindent {\bf Proof of Theorem \ref{thm1}:} Now we directly make use of the $L^\infty$ estimate \eqref{uniformbounded} and the blow-up criterion \eqref{blowup} to obtain that
\begin{align}
\|u(\cdot,t)\|_{L^\infty(\Omega)} \le C(\|u_0\|_{L^1(\Omega)}, \|u_0\|_{L^\infty(\Omega)})
\end{align}
for all $t \in (0,\infty)$. $\Box$

\section{Proof of Theorem \ref{thm2} (Convergence to the equilibrium)}
 In this Section we show the convergence to the constant equilibrium for solutions to system \eqref{nkpp00}. The proof is divided into three Lemmata where in the first one we prove the ordering of the solutions to the ODE system \eqref{odesyst} with respect to the equilibrium point \eqref{equi}, in the second one the convergence of the solutions to  \eqref{odesyst} towards the constant equilibrium is proven and in the last Lemma we show the relation between these ODE solutions and our original system \eqref{nkpp00}.
 
 First we examine the order of the solution to \eqref{odesyst} with respect to the constant equilibrium.
 \begin{lemma}[ODE-ordering]\label{lem1odeOrd}
	Let $\underline u_0,\overline u_0$ defined in \eqref{odeIC} satisfy $0<\underline u_0<\xi<\overline u_0$ then, system \eqref{odesyst} has a unique solution in $(0,T)$ with $T$ defined by $\limsup_{t\to T}(|\underline u|+|\overline u|+t)=+\infty$ and
	\begin{equation}\label{odeOrder}
	0<\underline u<\xi<\overline u, \quad t\in(0,T).
	\end{equation} \end{lemma}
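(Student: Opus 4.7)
The plan is to combine standard ODE theory with an invariance argument for the open rectangle $(0,\xi)\times(\xi,\infty)$ around the equilibrium $(\xi,\xi)$.

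First I would address local existence and uniqueness. Since $\alpha,\gamma,m\ge 1$, the right-hand side of \eqref{odesyst} is $C^1$ in $(\underline u,\overline u)$ on any open subset of $(0,\infty)^2$, so the Picard–Lindel\"of theorem gives a unique maximal classical solution on some interval $[0,T)$. The characterization $\limsup_{t\to T}(|\underline u|+|\overline u|+t)=+\infty$ then follows by the standard continuation criterion. Positivity of $\underline u$ is handled separately: the $\underline u$-equation can be written as $\underline u_t = \underline u^m\,g(\underline u,\overline u) + \underline u^\alpha\,h(\underline u)$, so $\underline u\equiv 0$ is a solution of this equation viewed with $\overline u$ given; uniqueness (via a Gronwall estimate on $|\underline u|$ against the locally Lipschitz coefficients) then forces $\underline u(t)>0$ as long as $\underline u(0)>0$.

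The main point is the strict ordering $\underline u(t)<\xi<\overline u(t)$. I would argue by contradiction. Set
\[
t^{\ast}:=\inf\bigl\{\,t\in(0,T)\;:\;\overline u(t)\leq \xi\ \text{ or }\ \underline u(t)\geq \xi\,\bigr\},
\]
and assume $t^{\ast}<T$. By continuity, either (i) $\overline u(t^{\ast})=\xi$ and $\underline u(t^{\ast})\le\xi$, or (ii) $\underline u(t^{\ast})=\xi$ and $\overline u(t^{\ast})\ge\xi$. In case (i), evaluating \eqref{odesyst} at $\overline u=\xi$ gives
\[
\overline u_t(t^{\ast})=\chi\,\xi^{m}\bigl(\xi^{\gamma}-\underline u(t^{\ast})^{\gamma}\bigr)+\lambda\sigma\,\xi^{\alpha}(\xi^{\beta}-\xi^{\beta})=\chi\,\xi^{m}\bigl(\xi^{\gamma}-\underline u(t^{\ast})^{\gamma}\bigr)\ge 0,
\]
while the definition of $t^{\ast}$ as a first crossing from above forces $\overline u_t(t^{\ast})\le 0$; hence $\underline u(t^{\ast})=\xi$ as well. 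A symmetric computation in case (ii) yields $\overline u(t^{\ast})=\xi$. In either case, $(\underline u,\overline u)(t^{\ast})=(\xi,\xi)$, which is the equilibrium of \eqref{odesyst}. Uniqueness for the Cauchy problem then gives $(\underline u,\overline u)\equiv(\xi,\xi)$ on all of $[0,T)$, contradicting the strict initial ordering $\underline u_{0}<\xi<\overline u_{0}$. Thus $t^{\ast}=T$ and \eqref{odeOrder} holds on $(0,T)$.

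The main obstacle I anticipate is not any single step but keeping the sign information consistent at the proposed first-crossing time, since the two equations are coupled through the $\chi$-terms; this is exactly why the rectangle-method argument must close both possibilities simultaneously via the equilibrium $(\xi,\xi)$ and invoke uniqueness. All other ingredients (smoothness of the vector field, positivity through the $\underline u^{m}$, $\underline u^{\alpha}$ factors, and the continuation alternative) are routine consequences of the standing hypotheses $\alpha,\gamma,m\ge 1$ and $\beta>1$.
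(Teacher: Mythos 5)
Your proposal is correct and follows essentially the same route as the paper: unique local solvability from the $C^1$ (locally Lipschitz) right-hand side, positivity of $\underline u$ by uniqueness against the trivial solution, and a contradiction at the first time the ordering fails by comparing the sign of the derivative computed from \eqref{odesyst} with the sign forced by the crossing, with the degenerate case $(\underline u,\overline u)(t^\ast)=(\xi,\xi)$ eliminated by uniqueness at the equilibrium. The only cosmetic difference is that you funnel all crossing scenarios into the equilibrium case before invoking uniqueness, whereas the paper dispatches the strict-inequality cases directly from the strict sign of the derivative; both arguments are sound.
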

\begin{proof}
	Since the right hand sides of \eqref{odesyst} are continuous and locally Lipschitz functions of $\underline u,\overline u$ there exists a unique solution in $[0,T)$, also since the right hand sides are $C^1(\mathbb{R}^2)$ as functions of $\underline u,\overline u$ we get that
	$$
	(\underline u,\overline u)\in[C^2(0,T)]^2.
	$$
	Next, to show the ordering of these solutions we argue by contradiction. Assume that there exists a $t_0$ with $0<t_0<T$ such that \eqref{odeOrder} does not hold. First, we note that $\underline u(t_0)>0$ since if $\underline u(t_0)=0$ then, by  uniqueness this means that also $\underline u_0=0$ which contradicts our hypothesis. If $\xi=\underline u(t_0)<\overline u(t_0)$ then from \eqref{odesyst} at $t_0$ we have:
$$
\underline{u}'(t_0)=\chi\underline u^m(t_0)\left(\underline u^\gamma(t_0)-\overline u^\gamma(t_0)\right)+\lambda \sigma f(\underline u(t_0))<0
$$
which contradicts the assumption $\underline u(t_0)=\xi$.
With similar arguments we look into the cases $\underline u(t_0)<\overline u(t_0)=\xi$ and $\xi=\underline u(t_0)=\overline u(t_0)$ and conclude that such a $t_0$ does not exist, thus
$$
	0<\underline u<\xi<\overline u, \quad t\in(0,T).
$$
\end{proof}

Next, we prove the convergence of the solution to \eqref{odesyst} to the equilibrium $\xi$.
  \begin{lemma}[Convergence of the ODE solutions]
Let the assumptions of Theorem 1 hold and $C,\delta$ that satisfy
$$
\max\{\alpha-1,\gamma+m-1\}\leq\delta\leq\alpha+\beta-1,\quad \lambda C-2\chi>0
$$ then, $T=\infty$ and
\begin{equation}\label{odeConv}
\overline u\to \xi,\quad \underline u\to \xi.	
\end{equation}

 \end{lemma}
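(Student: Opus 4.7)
The plan is to show that the width of the enclosing rectangle, $L(t) := \overline u(t) - \underline u(t) \geq 0$, decays to zero as $t \to \infty$. Since Lemma \ref{lem1odeOrd} already gives the ordering $0 < \underline u(t) \leq \xi \leq \overline u(t)$ on $(0,T)$, a squeeze argument will immediately yield $\overline u, \underline u \to \xi$ as soon as $L \to 0$. Global existence ($T = \infty$) will come for free from a priori boundedness of $\overline u$ from above and of $\underline u$ away from $0$.

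For the a priori bounds, I would exploit the super-quadratic dissipation coming from the reaction term. Using $\sigma \xi^\beta = 1$, the ODE for $\overline u$ yields
\begin{equation*}
\overline u_t \leq \chi\,\overline u^{\gamma+m} + \lambda\, \overline u^\alpha - \lambda\sigma\,\overline u^{\alpha+\beta},
\end{equation*}
and the dissipative term dominates for large $\overline u$ because the admissible range of $\delta$ imposes $\alpha+\beta \geq \gamma+m$; this prevents $\overline u$ from escaping to infinity. Symmetrically, the positive reaction term $\lambda \underline u^\alpha$ near $\underline u = 0$ bounds $\underline u$ away from zero. Together with the ordering, this gives $0 < m_0 \leq \underline u(t) \leq \overline u(t) \leq M_0$ on $(0,T)$, and hence $T = \infty$.

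Next, subtracting the two ODEs in \eqref{odesyst} (and using $\sigma\xi^\beta = 1$) gives
\begin{equation*}
L_t = \chi(\overline u^m + \underline u^m)(\overline u^\gamma - \underline u^\gamma) + \lambda(\overline u^\alpha - \underline u^\alpha) - \lambda\sigma(\overline u^{\alpha+\beta} - \underline u^{\alpha+\beta}).
\end{equation*}
I would then apply the two-sided mean value estimates $p\, b^{p-1}(a-b) \leq a^p - b^p \leq p\, a^{p-1}(a-b)$ with $a = \overline u$, $b = \underline u$, and $p \in \{\gamma, \alpha, \alpha+\beta\}$, using the uniform bounds $m_0, M_0$. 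The chemotactic contribution is bounded above by $2\chi\,\kappa_1 L$ and the net reaction contribution by $-\lambda C\,\kappa_2 L$ for positive constants $\kappa_1,\kappa_2$ depending only on $m_0, M_0$ and the exponents. The admissible range $\max\{\alpha-1,\gamma+m-1\} \leq \delta \leq \alpha+\beta-1$ is precisely what makes these two one-sided estimates compatible at a common power, while the hypothesis $\lambda C - 2\chi > 0$ ensures that the reaction wins. One then obtains
\begin{equation*}
L_t \leq -(\lambda C - 2\chi)\, \kappa\, L,
\end{equation*}
and Gronwall's lemma yields exponential decay $L(t) \to 0$.

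The delicate point — and the main obstacle — is the precise identification of the coefficient $C$ and of the admissible exponent $\delta$. The natural exponents produced by the three differences are $\gamma+m-1$, $\alpha-1$, and $\alpha+\beta-1$, and the "reaction beats chemotaxis" mechanism works only once these are aligned in a single estimate; this is exactly why $\delta$ must lie in $[\max\{\alpha-1, \gamma+m-1\}, \alpha+\beta-1]$ (non-empty by virtue of $\alpha+\beta \geq \gamma+m$). Once $L \to 0$ is established, the strict ordering $\underline u \leq \xi \leq \overline u$ from Lemma \ref{lem1odeOrd} forces both $\overline u(t)$ and $\underline u(t)$ to converge to $\xi$, completing the proof.
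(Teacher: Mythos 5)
Your overall strategy coincides with the paper's: derive a scalar differential inequality for the width of the rectangle, align the three competing powers $\gamma+m-1$, $\alpha-1$, $\alpha+\beta-1$ at a common exponent $\delta$, use $\lambda C-2\chi>0$ so that the reaction beats the chemotaxis, and close with Gr\"onwall plus a squeeze against the ordering of Lemma \ref{lem1odeOrd}. The only structural difference is that you track $L=\overline u-\underline u$ while the paper divides each equation by its unknown and tracks $\log(\overline u/\underline u)$; that choice is immaterial.

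The genuine gap is the uniform lower bound $\underline u\geq m_0>0$, on which your mean-value constants $\kappa_1,\kappa_2$ (hence the whole Gr\"onwall step) and part of the $T=\infty$ claim rest. Your justification, namely that the positive reaction term $\lambda\underline u^\alpha$ repels $\underline u$ from $0$, ignores the chemotactic term $\chi\underline u^m(\underline u^\gamma-\overline u^\gamma)\approx-\chi\overline u^{\gamma}\underline u^{m}$, which is negative and, since $\overline u\geq\xi>0$, dominates $\lambda\underline u^{\alpha}$ as $\underline u\to0^{+}$ whenever $\alpha>m$ --- in particular throughout the regime $\gamma+m\leq\alpha$ of Theorem \ref{thm1}, where $\alpha\geq m+1$. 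So near $\underline u=0$ the right-hand side of the $\underline u$-equation is negative and no barrier exists on the strength of the reaction alone. A uniform lower bound on $\underline u$ is in fact a consequence of the decay of $L$ (via $\underline u=\overline u-L\geq\xi-L$), which is precisely the step that requires $m_0$: as written, your plan is circular. It can be repaired by a continuation argument on the maximal interval where, say, $\underline u\geq\underline u(0)/2$, together with control of $L(0)$, but some such device must be supplied. Two smaller points: when $\alpha+\beta=\gamma+m$, which Theorem \ref{thm2} allows, your upper barrier for $\overline u$ compares equal powers and needs $\lambda\sigma>\chi$ rather than following automatically; and the paper's $\log$-formulation faces the same degeneracy when converting $\overline u^{\delta}-\underline u^{\delta}$ into $\log\overline u-\log\underline u$, so your instinct that the constants are the delicate point is well placed --- it is only the stated justification of the lower bound that fails.
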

 \begin{proof}
Divide the first equation of \eqref{odesyst} with $\overline u$ and the second one with $\underline u$
\begin{align}\label{odesystDiv}
\frac{d}{dt}\log\overline u
&=\chi \overline u^{m-1}(\overline u^\gamma-\underline u^\gamma)
+\lambda\sigma \overline u^{\alpha-1} (\xi^\beta-\overline u^\beta),
\nonumber\\
\frac{d}{dt}\log\underline u&=\chi \underline u^{m-1}(\underline u^\gamma-\overline u^\gamma)
+\lambda\sigma \underline u^{\alpha-1} (\xi^\beta-\underline u^\beta),
\end{align}
after subtracting we get:
\begin{align}\label{ode1stbound}
\frac{d}{dt}\log \frac{\overline u}{\underline u}	
&=
\chi(\overline u^{\gamma+m-1}-\underline u^{\gamma+m-1})
-\chi(\underline u^\gamma\overline u^{m-1}-\overline u^\gamma\underline u^{m-1})
+\lambda\sigma g(\underline u,\overline u)
\nonumber\\
&\stackrel{\eqref{odeOrder}}{\leq}
2\chi(\overline u^{\gamma+m-1}-\underline u^{\gamma+m-1})
+\lambda\sigma g(\underline u,\overline u)
\end{align}
with
$$
 g(\underline u,\overline u)=\xi^\beta\overline u^{\alpha-1}-\xi^\beta \underline u^{\alpha-1}+\overline u^{\alpha+\beta-1}-\underline u^{\alpha+\beta-1}\leq C_9(\overline u^\delta-\underline u^\delta)
$$
and $\delta$ is chosen so that:
$$
\alpha-1\leq\delta\leq\alpha+\beta-1
$$
 for the bound of $g$ to hold and
$$
\gamma+m-1\leq\delta
$$
thus
$$
\max\{\alpha-1,\gamma+m-1\}\leq\delta\leq\alpha+\beta-1.
$$
Therefore, from \eqref{ode1stbound} we get
\begin{align}
\frac{d}{dt}\log\frac{\overline u}{\underline u}
\leq &(2\chi-\lambda C_9)(\overline u^\delta-\underline u^\delta)
\nonumber\\
&\stackrel{2\xi\leq\lambda C}{\leq}
-C_{10}(\overline u-\underline u)\leq-C_{10}(\log \overline u-\underline u)
\end{align}
where $C_{10}=\lambda C_9-2\chi>0$. Finally, using Gr\"onwall's Lemma and the previous Lemma \ref{lem1odeOrd} we get \eqref{odeConv}.

\end{proof}

 
 The last step before the proof of Theorem 2 is to determine the relation of this ODE solution to the solution of our original system.
\begin{lemma}[ODE-PDE ordering]
Let	$u_0\in W^{1,p}(\Omega)$ for some $p>N$ and there exists $\underline u_0>0$ such that $u_0>\underline u_0$ then, the solution to \eqref{nkpp00} for $t<T$ satisfies:
\begin{equation}
\underline u(t)\leq u(x,t)\leq \overline u(t)	.
\end{equation}

 \end{lemma}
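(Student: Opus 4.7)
The natural approach is a parabolic comparison argument run on the first possible violation of the sandwich $\underline u(t)\leq u(x,t)\leq\overline u(t)$, combined with the Neumann Hopf lemma on $\partial\Omega$ and an elliptic comparison principle for~$c$. Because the right-hand side of~\eqref{odesyst} for $\overline u$ involves $\underline u$ (and vice versa), the pair $(\underline u,\overline u)$ acts as a coupled invariant-region pair and the two bounds have to be propagated simultaneously rather than as individual super- and sub-solutions. I describe the upper bound; the lower case is entirely symmetric.

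Using the strict gap built into~\eqref{odeIC} and the continuity provided by Proposition~\ref{pro0} together with Lemma~\ref{lem1odeOrd}, define
$$
t_0:=\sup\{\,t\in[0,T):\underline u(s)\leq u(\cdot,s)\leq\overline u(s)\text{ on }\overline\Omega\text{ for all }s\in[0,t]\,\},
$$
so that $t_0>0$. I would then argue by contradiction and assume $t_0<T$. At $t=t_0$ one of the inequalities must be saturated at some $x_0\in\overline\Omega$, say $u(x_0,t_0)=\overline u(t_0)$, and the minimality of $t_0$ forces $u_t(x_0,t_0)\geq\overline u_t(t_0)$. Whether $x_0$ lies in the interior or on $\partial\Omega$, the Neumann boundary condition together with the Hopf lemma yields $\nabla u(x_0,t_0)=0$ and $\Delta u(x_0,t_0)\leq 0$. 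Using the identity $\Delta c=c-u^\gamma$ together with the vanishing of $\nabla u$, the chemotactic contribution in~\eqref{nkpp00} collapses at $(x_0,t_0)$ to the single term $\chi\,\overline u^m\bigl(\overline u^\gamma-c(x_0,t_0)\bigr)$.

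To bound $c(x_0,t_0)$ I would invoke the sandwich valid on $[0,t_0]$: since the constants $\underline u(t_0)^\gamma$ and $\overline u(t_0)^\gamma$ are, respectively, a sub- and a super-solution of the Neumann elliptic problem $-\Delta c+c=u^\gamma(\cdot,t_0)$, elliptic comparison gives $\underline u(t_0)^\gamma\leq c(\cdot,t_0)\leq\overline u(t_0)^\gamma$. Inserting these estimates, together with the averaged bound $\underline u^\beta\leq\dashint_\Omega u^\beta\,dx\leq\overline u^\beta$ available at $t_0$ and the saturation $u(x_0,t_0)=\overline u(t_0)$, into the PDE would reduce the right-hand side at $(x_0,t_0)$ to the right-hand side of the ODE~\eqref{odesyst} for $\overline u$, producing $u_t(x_0,t_0)\leq\overline u_t(t_0)$ and contradicting the minimality of $t_0$. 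The main obstacle I anticipate is exactly the nonlocal reaction $\dashint_\Omega u^\beta\,dx$: it is controlled only in an averaged sense by the sandwich, so the comparison between $u_t$ and $\overline u_t$ at $(x_0,t_0)$ may fail to be strict. To handle the borderline equality cases I would regularise by replacing the initial data of~\eqref{odesyst} with $\overline u(0)+\varepsilon$ and $\underline u(0)-\varepsilon$, run the argument above on the perturbed pair $(\underline u_\varepsilon,\overline u_\varepsilon)$ with the strict gap furnishing a strict contradiction, and then pass $\varepsilon\to 0$ exploiting the continuous dependence of~\eqref{odesyst} on initial data guaranteed by Lemma~\ref{lem1odeOrd}.
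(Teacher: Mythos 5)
Your comparison-principle strategy breaks down at the reaction term, and the obstacle is not quite the one you flagged. At a first touching point $(x_0,t_0)$ with $u(x_0,t_0)=\overline u(t_0)$ the chemotactic part does collapse correctly: $-\chi\nabla\cdot(u^m\nabla c)=\chi u^m(u^\gamma-c)\le\chi\overline u^m(\overline u^\gamma-\underline u^\gamma)$ using $c\ge\underline u^\gamma$, which matches the first term of \eqref{odesyst}. But for the reaction you need $\lambda\overline u^\alpha\left(1-\sigma\dashint_\Omega u^\beta\,dx\right)\le\lambda\sigma\overline u^\alpha(\xi^\beta-\overline u^\beta)=\lambda\overline u^\alpha(1-\sigma\overline u^\beta)$ (recall $\sigma\xi^\beta=1$ by \eqref{equi}), i.e.\ $\dashint_\Omega u^\beta\,dx\ge\overline u^\beta$. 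The sandwich valid on $[0,t_0]$ gives exactly the opposite: $u\le\overline u$ pointwise forces $\dashint_\Omega u^\beta\,dx\le\overline u^\beta$, so the PDE's reaction at the touching point is at least as large as the ODE's, and in general strictly larger by an $O(1)$ amount (e.g.\ when $u$ is close to $\underline u$ on most of $\Omega$ the average is near $\underline u^\beta$ while the ODE uses $\overline u^\beta$). This is a wrong-sign inequality, not a borderline equality, so your $\varepsilon$-perturbation of the initial data, which buys only an $O(\varepsilon)$ cushion, cannot repair it. A pointwise supersolution compatible with your argument would need $\xi^\beta-\underline u^\beta$ in the reaction slot, but that is a different ODE from \eqref{odesyst} (and one whose solution would no longer decay to $\xi$, defeating Theorem \ref{thm2}).

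For comparison, the paper avoids the pointwise maximum principle altogether: it sets $\overline U=u-\overline u$ and $\underline U=u-\underline u$, tests the equation for $\overline U$ with $\overline U_+$ and the one for $\underline U$ with $\underline U_-$, controls the chemotactic terms via the $L^\infty$ and $W^{1,\infty}$ bounds from Theorem \ref{thm1} together with an auxiliary elliptic estimate for $\underline C=c-\underline u^\gamma$ tested with $-\underline C_-$, and closes a Gr\"onwall inequality for $\int_\Omega(\overline U_+^2+\underline U_-^2)\,dx$, which vanishes at $t=0$ and hence for all $t<T$. If you want to rescue a maximum-principle route you would have to either redesign \eqref{odesyst} or treat the nonlocal term through a separate integral estimate, which essentially pushes you back to an $L^2$-type argument of the paper's kind.
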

 \begin{proof}
	We set:
	$$
	\overline U=u-\overline u,\quad \overline C=c-\overline u^\gamma
	$$
		$$
	\underline U=u-\underline u,\quad \underline C=c-\underline u^\gamma
	$$
	and 
	$$
	h(u)=\chi u^{m+\gamma}+\lambda u^\alpha(1-\sigma\dashint_\Omega u^\beta\;dx).
	$$
	Then, $\overline U$ satisfies:
	$$
	\overline{U}_t-\Delta\overline U=-m\chi u^{m-1}\nabla \overline U\cdot\nabla c+h(u)-h(\overline u) +\chi(\overline u^m\underline u^\gamma-u^mc).
	$$
	We test the above equation with the positive part of $\overline U$ i.e.  $\overline U_+$ to get:
	\begin{align}\label{testPP}
	\frac12\frac{d}{dt}\int_\Omega \overline U_+^2\;dx+\int_\Omega |\nabla\overline U_+|^2\;dx
	&=
	-m\chi\int_\Omega u^{m-1}U_+\nabla\overline U\cdot\nabla c\;dx	
	+\int_\Omega (h(u)-h(\overline u))\overline U_+\;dx
	+\chi\int_\Omega \overline U_+(\overline u^m\underline u^\gamma-u^mc)\;dx
	\nonumber\\
	&=I+II+III
	\end{align}
We treat these three integrals separately:
\begin{equation}\label{i}
I=		-m\chi\int_\Omega u^{m-1}U_+\nabla\overline U\cdot\nabla c\;dx	
\leq C_{11}\|u\|^{2m-2}_{L^\infty(\Omega_T)}\|c\|^2_{W^{1,\infty}(\Omega)}\int_\Omega \overline U^2_+\;dx+\frac12\int_\Omega |\nabla\overline U_+|^2\;dx
\end{equation}
\begin{equation}\label{ii}
II=	\int_\Omega (h(u)-h(\overline u))\overline U_+\;dx\leq C_{12}(\|u\|_\infty,h')\int_\Omega\overline U^2_+\;dx
\end{equation}
\begin{align}\label{iii}
III&=	\chi\int_\Omega \overline U_+(\overline u^m\underline u^\gamma-u^mc)\;dx=
\chi\int_\Omega \overline U_+[(\overline u^m-u^m)\underline u^\gamma-u^m(c-\underline u^\gamma)]\;dx\nonumber\\
&\leq-\chi\int_\Omega u^m\underline C\overline U_+\;dx
\leq\chi\|u\|^m_{L^\infty(\Omega)}\int_\Omega\underline C_-\overline U_+\;dx
\leq\frac12\chi\|u\|^m_{L^\infty(\Omega)}\int_\Omega\underline{C}_-^2+\overline U_+^2\;dx
\end{align}
Next testing
$$
\Delta \underline C+\underline C=u^\gamma-\underline u^\gamma
$$
with $-\underline C_-$ we get:
\begin{equation}\label{iv}
\int_\Omega|\nabla \underline C_-|^2\;dx+\frac12\int_\Omega|\underline C_-|^2\;dx
\leq\int_\Omega (u^\gamma-\underline u^\gamma)^2_-\;dx
\stackrel{m,\gamma\geq1}{\leq}C_{13}(\gamma,\|u\|_{L^\infty(\Omega)})\int_\omega\underline U^2_-\;dx
\end{equation}

Plugging \eqref{i}, \eqref{ii}, \eqref{iii}, \eqref{iv} into \eqref{testPP} we get:
\begin{equation}\label{star1}
\frac12\frac{d}{dt}\int_\Omega \overline U^2_+\;dx
+\frac12\int_\Omega |\nabla \overline U_+^2\;dx	
\leq C_{14}(\|u\|_{L^\infty(\Omega)},h')\int_\Omega (\overline U^2_++\underline U^2_-)\;dx
\end{equation}
similarly,
\begin{equation}\label{star2}
\frac12\frac{d}{dt}\int_\Omega \underline U^2_-\;dx
+\frac12\int_\Omega |\nabla \underline U_-|^2\;dx	
\leq C_{15}(\|u\|_{L^\infty(\Omega)},h')\int_\Omega (\overline U^2_++\underline U^2_-)\;dx
\end{equation}
adding \eqref{star1} to \eqref{star2} and using Gr\"onwall's Lemma we get
$$
\overline U_+=\underline U_-=0,\quad \forall t\leq \widehat T< T
$$
the result follows after taking the limit $\widehat T\to T$.
\end{proof}
Next, we turn to the proof of our second Theorem:
\begin{proof}[Proof of Theorem \ref{thm2}]
Since $T=\infty$ and 
\begin{align}
\label{est1}\underline u&\leq u\leq \overline u,\\
\label{est2}\underline u^m&\leq u\leq \overline u^m,		
\end{align}
we get that $u,c\in L^\infty(\Omega)$, actually $(u,c)$ are uniformly bounded in $(0,\infty)$. Next, we notice that
\begin{align*}
	\|u-\xi\|_{L^\infty(\Omega)}&\leq |\overline u-\xi|+\|\overline u-u\|_{L^\infty(\Omega)}\stackrel{\eqref{est1}}{\leq}|\overline u-\underline u|\to0
	\\
	\|c-\xi\|_{L^\infty(\Omega)}&\leq |\overline u^m-\xi|+\|\overline u^m-c\|_{L^\infty(\Omega)}\stackrel{\eqref{est2}}{\leq}|\overline u^m-\underline u^m|\to0
\end{align*}
which completes the proof.
\end{proof}

\section*{Acknownledgements}
The author was supported by the project: SFB Lipid Hydrolysis.

\end{document}